\theoremstyle{plain}
\newtheorem{theorem}{Theorem}
\newtheorem*{theorem*}{Theorem}
\newtheorem{lemma}[theorem]{Lemma}
\newtheorem{remark}{Remark}
\newtheorem*{qw*}{Question}
\newtheorem{claim}{Claim}
\newcommand{\be}{\begin{equation}}
\newcommand{\ee}{\end{equation}}
\newcommand {\la}{\lambda}
\newcommand {\te}{\theta}
\newcommand{\tu}{Tur\'{a}n }
\begin{document}

\title{On Tur\'{a}n inequality for ultraspherical polynomials}

\author[I. Krasikov]{Ilia Krasikov}

\address{   Department of Mathematical Sciences,
            Brunel University,
            Uxbridge UB8 3PH United Kingdom}
\email{mastiik@brunel.ac.uk}

\begin{abstract}
        We show that the normalised ultraspherical polynomials, $G_n^{(\lambda)}(x)=C_n^{(\lambda)}(x)/C_n^{(\lambda)}(1)$, satisfy the following stronger version of Tur\'{a}n inequality,
$$|x|^\te \left(G_n^{(\la)}(x)\right)^2 -G_{n-1}^{(\la)}(x)G_{n+1}^{(\la)}(x) \ge 0 ,\;\;\;|x| \le 1, $$
where $\te=4/(2-\la)$ if $-1/2 <\la \le 0$, and $\te=2/(1+2\la)$ if $\la \ge 0$.  We also provide a similar generalisation of Tur\'{a}n  inequalities for some symmetric orthogonal polynomials with a finite or infinite support defined by a three term recurrence.
\end{abstract}

\maketitle

\noindent
{\bf Keywords:}
 Orthogonal polynomials, Tur\'{a}n inequality, ultraspherical polynomials, Gegenbauer polynomials, three term recurrence.
\noindent

\vspace{3mm}
\noindent \emph{2010 Mathematics Subject Classification 33C45}

\section{Introduction}

Let
$$y_n(x)=G_n^{(\la)}(x) =\dfrac{C_n^{(\la)}(x)}{C_n^{(\la)}(1)} $$
be the normalised ultraspherical polynomials, $G_n^{(\la)}(1)=1$.
The classical \tu inequality states \cite{sz1} that
\be
\label{tur0}
y_n^2(x) -y_{n-1}(x) y_{n+1}(x) \ge 0,  \;\; \la \ge 0, \;\; |x| \le 1.
\ee
Gerhold and Kauers \cite{GK}  gave a computer proof of the following stronger result in the special case $\la=\frac{1}{2}$ of Legendre polynomials $P_n(x)$,
\be
\label{leg0}
|x|P_n^2(x)-P_{n-1}(x)P_{n+1}(x)  \ge 0 , \;\;\; |x| \le 1.
\ee
In turn, this was extended by Nikolov and Pillwein to the ultraspherical case \cite{GP},
\be
\label{leg1}
|x| y_n^2(x) -y_{n-1}(x) y_{n+1}(x) \ge 0, \;\;\; -1/2 < \la \le 1/2, \;\;|x| \le 1.
\ee

In this paper we use an elementary geometric approach which, in particular, leads  to the following sharper generalization of (\ref{leg1}):
\begin{theorem}
\label{thmain}
For $\la > -1/2$ and $|x| \le 1$ the following \tu type inequality holds,
\be
\label{tumain}
\Delta_n=\Delta_n^{(\la)}(x)=|x|^\te y_n^2(x) -y_{n-1}(x)y_{n+1}(x) \ge 0,
\ee
where
$$\te= \left\{
\begin{array}{cc}
\dfrac{4}{2-\la} \, ,& -1/2 <\la \le 0;\\
&\\
\dfrac{2}{1+2 \la} \,,& \la \ge 0.
\end{array}
\right.
$$
Moreover, for $\la \ge 0$ the above value of
$ \te $ is the best possible for all $n \ge 1$.
\end{theorem}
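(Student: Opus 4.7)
My plan is to rewrite $\Delta_n$ via the three-term recurrence, reduce the theorem to a scalar inequality for the ratio $\rho_n = y_{n-1}/y_n$, and then propagate this inequality through the induced M\"obius iteration.

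\textbf{Step 1 (Reduction to a scalar inequality).} Dividing the standard recurrence $(n{+}1)C_{n+1}^{(\la)}(x)=2(n+\la)x\,C_n^{(\la)}(x)-(n+2\la-1)C_{n-1}^{(\la)}(x)$ by $C_n^{(\la)}(1)$ and using $C_{n+1}^{(\la)}(1)/C_n^{(\la)}(1)=(n+2\la)/(n+1)$ yields
\[
y_{n+1}(x)=\alpha_n x\,y_n(x)-\beta_n\,y_{n-1}(x),\qquad \alpha_n=\tfrac{2(n+\la)}{n+2\la},\ \ \beta_n=\tfrac{n}{n+2\la},
\]
with $\alpha_n-\beta_n=1$ (so $y_{n+1}(1)=1$). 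Substituting into $\Delta_n$ produces the quadratic form $\Delta_n(x)=|x|^{\te}y_n^2-\alpha_n x\,y_n y_{n-1}+\beta_n\,y_{n-1}^2$. Dividing by $y_n^2$ and setting $\rho_n(x)=y_{n-1}(x)/y_n(x)$, the theorem is equivalent to
\[
\Phi_n(\rho_n(x),x)\;:=\;\beta_n\rho_n^2(x)-\alpha_n x\,\rho_n(x)+|x|^{\te}\;\ge\;0,
\]
and the recurrence becomes the M\"obius iteration $\rho_{n+1}(x)=1/\bigl(\alpha_n x-\beta_n\rho_n(x)\bigr)$.

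\textbf{Step 2 (Geometric dynamics).} The quadratic $\Phi_n(\cdot,x)$ takes negative values only inside the interval between its two (real) roots. I will exhibit a region $R(x)\subset\mathbb{R}$ such that (i) every iterate $\rho_n(x)$ lies in $R(x)$, (ii) $R(x)$ is carried into itself by each M\"obius map $\rho\mapsto 1/(\alpha_n x-\beta_n\rho)$, and (iii) $R(x)$ is disjoint from every forbidden interval $\{\rho:\Phi_n(\rho,x)<0\}$. The base case $n=1$ reduces to the explicit one-variable inequality $|x|^{\te+2}-\alpha_1 x^2+\beta_1\ge 0$ on $[-1,1]$, which pins down the critical value of $\te$; induction via the M\"obius iteration does the rest, with the $\te$ of the theorem being the unique one that makes the containment tight.

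\textbf{Step 3 (Sharpness for $\la\ge 0$).} Using $y_n(1)=1$ together with the identity $y_n'(1)=n(n+2\la)/(2\la+1)$ (which gives $y_{n+1}'(1)+y_{n-1}'(1)-2y_n'(1)=2/(2\la+1)$), a Taylor expansion at $x=1$ yields
\[
\Delta_n(x)=\bigl(\te-\tfrac{2}{2\la+1}\bigr)(x-1)+O\bigl((x-1)^2\bigr).
\]
Since $\Delta_n(1)=0$, the condition $\Delta_n\ge 0$ on $[-1,1]$ forces $\te\le 2/(1+2\la)$ for every $n\ge 1$, establishing sharpness.

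\textbf{Main obstacle.} The technical heart is Step~2: producing a single region $R(x)$, uniform in $n$, that is simultaneously preserved by every M\"obius map and disjoint from every forbidden interval. The change of formula at $\la=0$, from $4/(2-\la)$ to $2/(1+2\la)$, reflects that for $\la\ge 0$ the binding constraint comes from $x$ near $\pm 1$ (where $\rho_n$ and the left root of $\Phi_n$ collide, as Step~3 already shows), whereas for $-1/2<\la<0$ the binding constraint comes from the oscillatory regime near $x=0$; reconciling the two regimes in one geometric argument is the delicate point.
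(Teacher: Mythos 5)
Your Step 1 is a correct reduction (it is the same algebra the paper performs, up to using $\rho_n=y_{n-1}/y_n$ instead of $t_n=y_{n+1}/y_n$), and your Step 3 is a correct and self-contained proof of sharpness for $\la\ge 0$: the identity $y_n'(1)=n(n+2\la)/(2\la+1)$ gives $\Delta_n'(1)=\te-2/(2\la+1)$, which matches (after normalisation) the expansion the paper obtains with Mathematica. The problem is Step 2, which is the entire content of the induction step and is presented only as an intention ("I will exhibit a region $R(x)$\dots"), with your own closing paragraph conceding that the construction has not been carried out. As written, no region $R(x)$ is produced, none of the three properties (i)--(iii) is verified, and the value $\te=4/(2-\la)$ for $-1/2<\la<0$ is never derived: your base case only requires $\te<2$ in that range, so the claim that the base case "pins down the critical value of $\te$" is false for $\la<0$. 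In the paper that exponent emerges from the induction step itself, as the condition $x_0^{\te}>\frac{(n+1)(n+2\la)}{n(n+2\la+1)}$ needed to keep a resultant positive on the relevant range, taken in the limit $n\to\infty$.

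There is also reason to doubt that your proposed mechanism, a single $n$-uniform region invariant under every M\"obius map $\rho\mapsto 1/(\alpha_n x-\beta_n\rho)$, is the right one. The paper sidesteps the M\"obius dynamics entirely: using the recurrence once backwards and once forwards, it expresses \emph{both} $\Delta_n$ and $\Delta_{n+1}$ as quadratics in the \emph{same} ratio $t_n=y_{n+1}/y_n$, namely $n\Delta_n/y_n^2=(n+2\la)t^2-2(n+\la)xt+nx^{\te}$ and $(n+2\la+1)\Delta_{n+1}/y_n^2=(n+2\la+1)x^{\te}t^2-2(n+\la+1)xt+(n+1)$. The induction step then reduces to showing that the two forbidden regions (where these quadratics are negative) are nested, with the $(n+1)$-st inside the $n$-th; this is done by checking that the resultant of the two conics in $t$ does not vanish on $(0,1)$ and comparing the branches at $x=1$. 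No invariant set for the iteration is needed, and the forbidden regions genuinely depend on $n$, so the containment that matters is between \emph{consecutive} regions rather than with a fixed $R(x)$. To complete your argument you would either have to construct $R(x)$ explicitly and prove invariance uniformly in $n$ (nontrivial, and not obviously possible in the form stated), or switch to the same-variable formulation and carry out the resultant and nesting computations, which is where all the real work of the theorem lies.
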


For $-1/2 <\la < 0$ the obtained result $ \te=\dfrac{4}{2-\la}$  is not sharp and, in fact, the maximal possible value of $\te$ depends on $n$.
Note also that generally  one must have $\te \le 2$ for all results of this type.
This is so because for any family of orthogonal polynomials defined by the general three term recurrence
$$p_{n+1}(x)=(b_n x+c_n)p_n(x)-a_n p_{n-1}(x) ,$$
the following easy to check identity holds for all $x \in \mathbb{R}$:
\begin{equation}
\label{ident}
\frac{(x b_n+c_n)^2}{4 a_n} \, p_n^2(x)-p_{n-1}(x)p_{n+1}(x) =
\frac{(p_{n+1}(x)-a_n p_{n-1}(x))^2}{4a_n} \ge 0.
\end{equation}
Thus, all local maxima of the ratio
$\dfrac{p_{n-1}(x)p_{n+1}(x)}{p_n^2(x)}$ lie on the curve $\dfrac{(x b_n+c_n)^2}{4 a_n}\,$. In particular, (\ref{ident}) is sharp in the orthogonality interval.

For the normalised ultraspherical polynomials, which are  defined by the recurrence
\be
\label{3termr}
(n+2 \la) y_{n+1}=2(n+\la)x y_n-n y_{n-1},
\ee
identity (\ref{ident}) yields
\be
\label{univ}
\left(1+\dfrac{\la^2}{n(n+2\la)} \right)x^2 y_n^2-y_{n-1}y_{n+1} \ge 0 , \;\;\; x \in \mathbb{R}.
\ee
Thus, the inequality
$$x^2 y_n^2-y_{n-1}y_{n+1} \ge 0, \;\; x \in [-1,1],$$
 is possible only for $\la=0$, that is, for Chebyshev $T_n$ polynomials.

Our proof of Theorem \ref{thmain} can be extended to some other families of orthogonal polynomials. In particular, we provide the following generalisation
of the case $\lambda < 0$ of Theorem \ref{thmain}.
Consider a family of symmetric polynomials $p_n(x)$ orthogonal on $[-1,1]$ and  normalised by $p_n(1)=1$.
Let $p_{-1}=0,\; p_0=1$ and $a_0=0$, then such
polynomials are defined by the following the three term recurrence,
\begin{equation}
\label{gen3term}
(1-a_n)p_{n+1}(x)=x p_n(x)- a_n p_{n-1}(x), \;\;\; 0 <a_n <1.
\end{equation}
\begin{theorem}
\label{genleg}
Suppose that the sequence $a_n$ in (\ref{gen3term}) is decreasing and $\frac{1}{2} <a_n <1$.
Then the following \tu type inequality holds,
$$\Delta_n = |x|^\te p_n^2(x)-p_{n-1}(x)p_{n+1}(x) \ge 0, \;\;\;|x| \le 1, $$
where
\be
\label{gen te}
\te= \inf\limits_{n \ge 1} \dfrac{2 \log \dfrac{(1-a_n)\,a_{n+1}}{(1-a_{n+1})\,a_n}}{\log \dfrac{4(1-a_n)\, a_{n+1}^2}{a_n}} .
\ee
\end{theorem}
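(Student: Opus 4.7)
The plan is to prove $\Delta_n\ge 0$ by induction on $n$, formulated as the evolution of the ratio $r_n(x):=p_n(x)/p_{n-1}(x)$ under the recurrence (\ref{gen3term}). Multiplying (\ref{gen3term}) by $p_{n-1}$ yields the pointwise identity
$$(1-a_n)\,\Delta_n(x)=a_np_{n-1}(x)^2-xp_{n-1}(x)p_n(x)+(1-a_n)|x|^\theta p_n(x)^2,$$
so $\Delta_n\ge 0$ is equivalent, after dividing by $p_{n-1}^2$, to $(1-a_n)|x|^\theta r_n^2-xr_n+a_n\ge 0$. Reduce to $x\in[0,1]$ by symmetry. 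When the discriminant $x^2-4a_n(1-a_n)x^\theta$ is nonpositive the inequality is automatic; otherwise it fails precisely on the open interval $(r_-^{(n)}(x),r_+^{(n)}(x))$ between the two real roots, and one checks $r_-^{(n)}(1)=1=r_n(1)$ and $r_+^{(n)}(1)=a_n/(1-a_n)>1$, so at $x=1$ the ratio $r_n$ sits exactly on the lower boundary of the admissible region.

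For the inductive step, observe that the recurrence induces the strictly monotone (on $r>0$) M\"obius map $\phi_n(r):=(xr-a_n)/((1-a_n)r)$, with $r_{n+1}=\phi_n(r_n)$. Substituting the defining relation $(1-a_n)x^\theta(r_-^{(n)})^2=xr_-^{(n)}-a_n$ gives the clean identity
$$\phi_n\bigl(r_-^{(n)}(x)\bigr)=x^\theta\,r_-^{(n)}(x).$$
Strengthening the inductive hypothesis to $r_n(x)\le r_-^{(n)}(x)$ and applying monotone $\phi_n$ then yield $r_{n+1}(x)\le x^\theta r_-^{(n)}(x)$, so the induction closes provided the separation inequality
$$\frac{r_-^{(n+1)}(x)}{r_-^{(n)}(x)}\;\ge\;x^\theta,\qquad x\in[0,1],$$
holds for every $n$. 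The hypotheses $a_n>1/2$ and $a_n$ decreasing enter precisely here: they force $a_n(1-a_n)$ to be increasing in $n$, so the forbidden intervals nest correctly and the discriminant for index $n+1$ vanishes at a larger $x$ than that for $n$.

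The main obstacle is to verify this separation inequality and extract the sharp exponent. Written out, it becomes
$$\frac{a_{n+1}}{a_n}\cdot\frac{x+\sqrt{x^2-4a_n(1-a_n)x^\theta}}{x+\sqrt{x^2-4a_{n+1}(1-a_{n+1})x^\theta}}\;\ge\;x^\theta,$$
with equality at $x=1$ (both sides collapse, since $x+\sqrt{x^2-4a(1-a)}$ evaluated at $x=1$ equals $2a$), so the binding $x$ is interior. Optimising --- the natural candidate being the endpoint $x^*_{n+1}=(4a_{n+1}(1-a_{n+1}))^{1/(2-\theta)}$ at which the inner discriminant for index $n+1$ just vanishes --- reduces the constraint, after algebraic manipulation, to $\theta\le\theta_n$ with $\theta_n$ the $n$-th ratio in (\ref{gen te}). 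Taking $\theta=\inf_n\theta_n$ then makes the induction uniform. The bulk of the remaining work is this optimisation in $x$, the accompanying algebraic simplification that should reproduce the logarithmic ratios of (\ref{gen te}), and a base-case and continuity argument ensuring that $r_n$ stays on the lower branch throughout, starting from $r_n(1)=r_-^{(n)}(1)=1$.
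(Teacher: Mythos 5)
Your reduction of $\Delta_n\ge 0$ to the quadratic inequality $(1-a_n)x^\theta r_n^2-xr_n+a_n\ge 0$ in $r_n=p_n/p_{n-1}$ is correct, and the identity $\phi_n\bigl(r_\pm^{(n)}\bigr)=x^\theta r_\pm^{(n)}$ is a genuinely nice observation (it is exactly the change of variables between the two quadratic representations of $\Delta_n$, in $p_n/p_{n-1}$ and in $p_{n+1}/p_n$). But the argument has a real gap at its core: the strengthened inductive hypothesis $r_n(x)\le r_-^{(n)}(x)$ does not follow from $\Delta_n\ge 0$ and is in general false. The inequality $\Delta_n\ge 0$ only forces $r_n\notin\bigl(r_-^{(n)},r_+^{(n)}\bigr)$, and $r_n$ has poles at the zeros of $p_{n-1}$ and vanishes at the zeros of $p_n$; immediately to the left of such a pole $r_n\to+\infty>r_+^{(n)}$, so $r_n$ sits on the \emph{upper} side there. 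To keep $r_n$ on the lower branch throughout the interval where $r_-^{(n)}$ is real, you would need to show that no zero of $p_{n-1}$ (and, for $\phi_n$ to be applied monotonically, no zero of $p_n$) lies in that interval --- a nontrivial statement about zero locations that the proposal never addresses and that the paper's Claim \ref{tilde} shows to be delicate: already for ultraspherical polynomials with $-1/2<\lambda<0$ (an instance of this theorem) the largest zero of $C_{n+1}^{(\lambda)}$ can lie to the right of the vertex of $\mathcal{T}_n$. When $r_n$ passes through $0$, the M\"obius map $\phi_n$ jumps across its pole and $r_{n+1}=\phi_n(r_n)$ becomes large and positive, so the one-sided conclusion $r_{n+1}\le r_-^{(n+1)}$ cannot propagate. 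The paper's proof is immune to this: it writes $\Delta_n$ and $\Delta_{n+1}$ as quadratics in the \emph{same} variable $t=p_{n+1}/p_n$ and uses only the two-sided fact that $t$ avoids the region bounded by $\mathcal{T}_n$, together with the nesting of $\mathcal{T}_{n+1}$ inside $\mathcal{T}_n$; which side $t$ is on is irrelevant.

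The second gap is the separation inequality $r_-^{(n+1)}\ge x^\theta r_-^{(n)}$ itself. You assert that evaluating it at $x^{*}=\bigl(4a_{n+1}(1-a_{n+1})\bigr)^{1/(2-\theta)}$ reduces, after manipulation, to $\theta\le\theta_n$, but this is not verified and the algebra does not come out that cleanly: setting $w=(x^{*})^\theta$, the endpoint condition becomes a quadratic inequality in $w$ whose two roots have product $\frac{a_{n+1}(1-a_n)}{a_n(1-a_{n+1})}$ but are not individually equal to it, whereas (\ref{gen te}) is precisely the condition $w>\frac{a_{n+1}(1-a_n)}{a_n(1-a_{n+1})}$. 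Moreover, nothing is offered to show that the minimum over $x\in[x^{*},1]$ of $r_-^{(n+1)}/\bigl(x^\theta r_-^{(n)}\bigr)$ is attained at the left endpoint; since equality already holds at $x=1$, a global or monotonicity argument is indispensable. The paper obtains (\ref{gen te}) by a different, self-contained computation: it bounds the resultant $R_n(x,\theta)=A^2-x^2BC$ from below by $A^2-x^\theta BC$, which factors into four terms linear in $x^\theta$, and positivity of that product on the relevant range is exactly $x_0^\theta>\frac{a_{n+1}(1-a_n)}{a_n(1-a_{n+1})}$, i.e.\ (\ref{gen te}). As written, your induction neither closes nor produces the stated exponent.
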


In the ultraspherical case and $-\frac{1}{2} <\la \le 0$, this gives the same value for $\te$ as Theorem \ref{thmain}.

We will also consider orthogonal polynomials with (possibly) unbounded support, the family which includes Hermite polynomials as a special case.
It was observed in \cite{askey}  that the \tu inequality
\begin{equation}
\label{turask}
p_n^2(x)-p_{n-1}(x)p_{n+1}(x) \ge 0
\end{equation}
holds for any family of monic symmetric orthogonal polynomials $p_n(x)$ defined by the three term recurrence
\be
\label{sym3r}
p_{n+1}(x)=x p_n(x)-a_n p_{n-1}(x), \; \; p_{-1}=0, \; p_0=1, \;\;\; a_n > 0,
\ee
with the increasing coefficients $a_n$.
We will show that (\ref{turask}) can also be strengthened in the spirit of Theorem \ref{thmain}. Note that in the case of an unbounded support, that is, $a_n \rightarrow \infty$ in (\ref{sym3r}),
any inequality of the form
$$\phi(x) p_n^2(x)-p_{n-1}(x)p_{n+1}(x) \ge 0,$$
must satisfy
$\lim\limits_{x \rightarrow \infty}\phi(x) =1$, as
$$\lim\limits_{x \rightarrow \infty} \frac{p_{n-1}(x)p_{n+1}(x)}{p_n^2(x)} =1,$$
and possibly even stronger restrictions due to the asymptotic behaviour of $p_n(x)$.
\begin{theorem}
\label{thherm}
Let $p_n(x)$ be a family of monic symmetric orthogonal polynomials satisfying (\ref{sym3r}) with an increasing sequence $a_n, \; n=1,2,...\,$.
Then the following \tu type inequality holds,
\be
\label{genhermite}
\Delta_n(x)=\frac{x^2}{x^2+a_{n}-a_{n-1}}\, p_n^2(x)-p_{n-1}(x)p_{n+1}(x) \ge 0, \;\;\; x \in \mathbb{R}.
\ee
In particular, for the Hermite polynomials in the monic or standard normalisation \cite{szego} we have
\be
\label{hermite}
\frac{x^2}{x^2+\frac{1}{2}}H_n^2(x)-H_{n-1}(x) H_{n+1}(x) \ge 0.
\ee
\end{theorem}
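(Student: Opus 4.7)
Write $Q_n(x) := x^2 p_n^2 - (x^2+a_n-a_{n-1})\,p_{n-1}p_{n+1}$; the goal is to show $Q_n \ge 0$ on $\mathbb{R}$. With the convention $a_0 := 0$, direct computation handles $n = 1$ (yielding $Q_1 = a_1^2$) and $n = 2$ (yielding $Q_2 = a_2^2 x^2$). The first main step is to derive the algebraic identity
\[
x^2 p_n^2 - (x^2+a_n-a_{n-1})\,p_{n-1}p_{n+1} \;=\; a_n^2\, p_{n-1}^2 - a_{n-1}a_{n-2}\, p_{n-3}\, p_{n+1},
\]
obtained by successively substituting $p_n = x p_{n-1} - a_{n-1}p_{n-2}$ and $x p_{n-2} = p_{n-1} + a_{n-2}p_{n-3}$ into the left-hand side and collecting terms. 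This reduces the theorem to showing $a_n^2 p_{n-1}^2 \ge a_{n-1}a_{n-2}\,p_{n-3}\,p_{n+1}$.

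From this reformulation, the easy case is $p_{n-3}(x)p_{n+1}(x)\le 0$: the right-hand side is then at least $a_n^2 p_{n-1}^2 \ge 0$ and we are done. When $p_{n-3}p_{n+1}>0$, multiplying Askey's classical \tu inequality \eqref{turask} at the levels $n-1$ and $n$ (namely $p_{n-2}^2\ge p_{n-3}p_{n-1}$ and $p_n^2\ge p_{n-1}p_{n+1}$) gives $p_{n-2}^2 p_n^2 \ge p_{n-1}^2\, p_{n-3}p_{n+1}$, and hence $a_{n-1}a_{n-2}\,p_{n-3}p_{n+1}\le a_{n-1}a_{n-2}\,p_{n-2}^2 p_n^2/p_{n-1}^2$. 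Combined with $a_n^2\ge a_{n-1}a_{n-2}$ (from the monotonicity of $a_n$) and the \tu-squared bound $p_{n-1}^4\ge p_{n-2}^2 p_n^2$, which holds whenever $p_{n-2}p_n\ge 0$, this closes the generic sub-case.

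The main obstacle is the residual sub-case in which $p_{n-3}p_{n+1}>0$ while $p_{n-2}p_n<0$, where the \tu-squared step degenerates. Substituting the two-step recurrence $p_{n+1}=(x^2-a_n-a_{n-1})p_{n-1}-a_{n-1}a_{n-2}p_{n-3}$, the right-hand side $a_n^2 p_{n-1}^2 - a_{n-1}a_{n-2}\,p_{n-3}p_{n+1}$ becomes a quadratic form in $(p_{n-1},p_{n-3})$ whose discriminant factors as $(a_{n-1}a_{n-2})^2(x^2-3a_n-a_{n-1})(x^2+a_n-a_{n-1})p_{n-3}^2$; it is non-positive exactly when $x^2\le 3a_n+a_{n-1}$, giving $Q_n\ge 0$ unconditionally on that range. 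For $x^2>3a_n+a_{n-1}$, $Q_n$ factors as $(q_+\,p_{n-1}-a_{n-1}a_{n-2}p_{n-3})(q_-\,p_{n-1}-a_{n-1}a_{n-2}p_{n-3})$, where $q_\pm>0$ are the real roots of $q^2-(x^2-a_n-a_{n-1})q+a_n^2=0$; one checks, via the two-step recurrence and the monotonicity of $a_n$, that both factors carry the same sign, so $Q_n\ge 0$. Finally, the Hermite case \eqref{hermite} follows by substituting $a_n=n/2$, which gives $a_n-a_{n-1}=1/2$.
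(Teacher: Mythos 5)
Your route is genuinely different from the paper's, which argues by induction on $n$: the paper rewrites $\Delta_n$ and $\Delta_{n+1}$ as quadratics in $t=p_{n+1}/p_n$, shows by a resultant computation that the two associated conics do not meet, and checks that they are nested. Your starting identity is correct and attractive: indeed $x^2p_n^2-a_n^2p_{n-1}^2=p_{n+1}\,(xp_n+a_np_{n-1})$ and $xp_n+a_np_{n-1}=(x^2+a_n-a_{n-1})p_{n-1}-a_{n-1}a_{n-2}p_{n-3}$, so the theorem is equivalent to $a_n^2p_{n-1}^2\ge a_{n-1}a_{n-2}\,p_{n-3}p_{n+1}$; moreover your discriminant computation correctly disposes of the entire range $x^2\le 3a_n+a_{n-1}$, and it does so with no sign hypotheses at all, since it treats $Q_n$ as a positive semidefinite quadratic form in the real numbers $p_{n-1}(x)$, $p_{n-3}(x)$.

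There are, however, two genuine gaps. First, in the ``generic'' sub-case you multiply $p_{n-2}^2\ge p_{n-3}p_{n-1}$ by $p_n^2\ge p_{n-1}p_{n+1}$; this is legitimate only when both right-hand sides are nonnegative, and your hypotheses ($p_{n-3}p_{n+1}>0$ and $p_{n-2}p_n\ge 0$) do not exclude sign patterns such as $\sgn(p_{n-3},\dots,p_{n+1})=(+,+,-,+,+)$, for which both right-hand sides are negative and the product inequality $p_{n-2}^2p_n^2\ge p_{n-1}^2p_{n-3}p_{n+1}$ does not follow (you also divide by $p_{n-1}^2$, which may vanish). Second, and more seriously, the whole burden of the region $x^2>3a_n+a_{n-1}$ rests on the assertion that the factors $q_\pm p_{n-1}-a_{n-1}a_{n-2}p_{n-3}$ ``carry the same sign''; that statement is exactly equivalent to $Q_n\ge 0$ there, and no argument is supplied --- ``via the two-step recurrence and the monotonicity of $a_n$'' is not a proof. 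Both gaps can be closed by one observation you do not make: since $3a_n+a_{n-1}\ge(\sqrt{a_{n-1}}+\sqrt{a_n})^2$ and the zeros of $p_{n+1}$ are the eigenvalues of a Jacobi matrix whose Gershgorin radius is at most $\sqrt{a_{n-1}}+\sqrt{a_n}$, every $p_k$ with $k\le n+1$ is strictly positive for $x>0$ with $x^2> 3a_n+a_{n-1}$; there the Tur\'{a}n chain gives $p_{n-3}p_{n+1}\le p_{n-1}^2$ and $a_n^2\ge a_{n-1}a_{n-2}$ finishes, while every troublesome sign configuration is confined to $x^2\le 3a_n+a_{n-1}$, which your discriminant step already covers. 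As written, though, the proof is incomplete at precisely its hardest points.
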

It would be interesting to obtain similar inequalities for Laguerre type polynomials orthogonal on $[0,\infty)$.

Our approach is based on the following observation. Setting
\begin{equation}
\label{defint}
t=t_n(x) = \frac{p_{n+1}(x)}{p_n(x)}
\end{equation}
and using the corresponding three term recurrence, one can rewrite
a \tu type inequality as a quadratic form $Q(x)=A(x)t^2+B(x)t+C(x)$ in $t$, where we have to assume that $A(x) > 0$, since otherwise the form attains negative values in the oscillatory region, where $t$ runs over all real values.  Suppose now that one wants to prove that $Q_{n+1}(x)=A_{n+1}(x)t^2+B_{n+1}(x)t+C_{n+1}(x) \ge 0$, using that  $Q_n(x)=A_n(x)t^2+B_n(x)t+C_n(x) \ge 0$.
Consider the corresponding curves defined by the equalities
$$\mathcal{T}_n(x)=A_n(x)\tau^2+B_n(x)\tau+C_n(x)=0,$$
and
$$\mathcal{T}_{n+1}(x)=A_{n+1}(x)\tau^2+B_{n+1}(x)\tau+C_{n+1}(x)=0. $$
First, one checks that $\mathcal{T}_n(x)$ and $\mathcal{T}_{n+1}(x)$ do not intersect by, say, showing that the resultant of the curves in $\tau$  does not vanish.
Next, since $ Q_n(x)\ge 0$, quadratic $Q_{n+1}(x)$ could be negative only inside the region enclosed by $\mathcal{T}_n(x)$. Therefore, it is enough to check that the curves are nested with $\mathcal{T}_{n+1}(x)$ lying inside  $\mathcal{T}_n(x)$. Fig.1 illustrates the above arguments for the ultraspherical case.

\begin{figure}[t]

\includegraphics[scale=0.8]{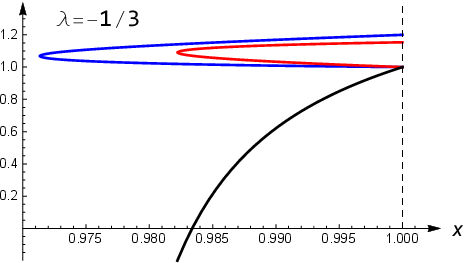}
\includegraphics[scale=0.8]{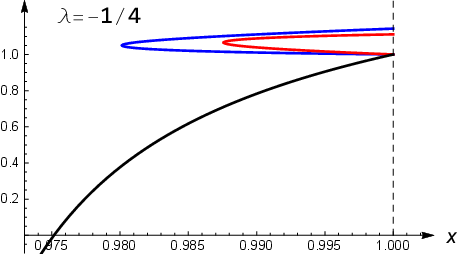}

\vspace{3mm}
\includegraphics[scale=0.8]{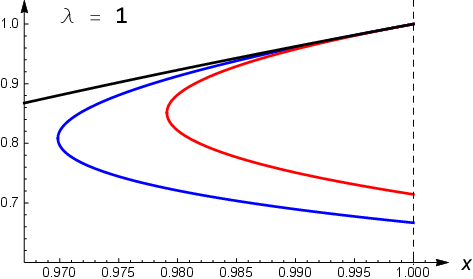}

\caption{The black line is $t_4$; the blue and red lines show $\mathcal{T}_4$ and $\mathcal{T}_5$, respectively, for three different values of $\la$. }

\end{figure}

\section{Proof of Theorem \ref{thmain}}

We will proceed by the induction on $n \ge 1$. Without loss of generality we assume that $\te <2$ and $x \in [0,1]$. Furthermore, when it is convenient, we may restrict the value of $x$ to open interval $(0,1)$ treating the endpoints $x=0,1$ as the limiting case.

 Assume first that $-\frac{1}{2} <\la < 0$ and check that $\Delta_1 >0$. In this case already the assumption $\te <2$ will suffice.
We have
$$(1+2 \la)\Delta_1=1-2(1+\la)x^2+(1+2\la)x^{2+\te}
 \ge 1-2(1+\la)x^2+(1+2\la)x^4 =$$
 $$(1-x^2)\left(1-(1+2\la)x^2 \right)>0.$$
For $ \la >0$ and $\te=\frac{2}{1+2\la}$, noticing that
$$ (1+2 \lambda)\frac{d \Delta_1}{d x}= -4(1+\lambda)(1-x^\te) x \le 0,$$
one finds
$$\Delta_1(x)> \Delta_1(1)=0.$$
Note that for $n=1$ the chosen value $\te= \frac{2}{1+2\la}$ is the best possible because the expansion of $\Delta_1$ into the Taylor series around $x=1$ is
$$(1+2 \lambda)\Delta_1(x)=\left( 2-(1+2\lambda) \te \right)(1-x) +O((1-x)^2). $$

More work is needed for the induction step in order to show that
$\Delta_n >0$ implies $\Delta_{n+1} >0$. We split the proof into a few lemmas.

As in (\ref{defint}), for fixed $n$ and $\lambda$ using we introduce the function
$$t=t_n^{(\la)}(x)= \dfrac{G_{n+1}^{(\la)}(x)}{G_{n}^{(\la)}(x)} \, .$$
Using (\ref{3termr}), we can rewrite $\Delta_n$ and $\Delta_{n+1}$ as follows,

\be
\label{w0}
\frac{n \Delta_n}{\left(G_n^{(\la)}(x) \right)^2}=( n+2\la) t^2 - 2 ( n+\la) x t +  n x^\te,
\ee

\be
\label{w1}
 \frac{(n+2\la+1)\Delta_{n+1}}{\left(G_n^{(\la)}(x) \right)^2}=(n+2\la+1)x^\te t^2-2(n+\la+1)x t+n+1.
\ee
Now consider the corresponding curves
$$ \mathcal{T}_n=( n+2\la) \tau_n^2 - 2 ( n+\la) x \tau_n +  n x^\te =0,$$
and
$$ \mathcal{T}_{n+1}=(n+2\la+1)x^\te \tau_{n+1}^2-2(n+\la+1)x  \tau_{n+1}+n+1 =0,$$
or explicitly,
\begin{equation}
\label{tn}
\tau_n= \frac{(n+\la) x \pm \sqrt{(n+\la)^2 x^2 -  n (n+2\la)x^\te}}{n+2\la} \,,
\end{equation}
\begin{equation}
\label{tn1}
\tau_{n+1}= \frac{(n+\la+1) x \pm \sqrt{(n+\la+1)^2 x^2 -( n+1) (n+2 \la+1)x^\te}}{(n+2\la+1)x^\te}\, ,
\end{equation}
where
$0 \le x \le 1$. Both curves have a single component and are simultaneously real for
\be
\label{x0}
x \ge x_0=\left( \frac{(n+1)(n+2\la+1)}{(n+\la+1)^2} \right)^{1/(2-\te)} \, ,
\ee
where $x_0$ is just the vertex of $\mathcal{T}_{n+1}$.  Thus, for $x \le x_0$ both functions $\Delta_n$ and $\Delta_{n+1}$ are nonnegative, the first one by the induction hypothesis. So (as it will be necessary for $\lambda <0$), we may restrict the proof to $x >x_0$.

To show that $\mathcal{T}_{n}$ and $\mathcal{T}_{n+1}$ do not intersect,  we calculate resultant $R_n(x, \te)$ of $\mathcal{T}_n(x)$ and $\mathcal{T}_{n+1}(x)$ in $t$ and check that it does not vanish.

\begin{lemma}
Let  $0 <x<1$, $\lambda >-1/2$, and $\te$ is defined in Theorem \ref{thmain}, then $R_n(x, \te) >0$.
\end{lemma}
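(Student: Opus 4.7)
The plan is to compute $R_n(x,\te)$ explicitly from the formula for the resultant of two quadratics and then establish strict positivity on $(0,1)$ by a factorization argument anchored at the boundary. Writing $\mathcal{T}_n(\tau)=a_1\tau^2+b_1\tau+c_1$ and $\mathcal{T}_{n+1}(\tau)=a_2\tau^2+b_2\tau+c_2$, with the coefficients read off from (\ref{tn})--(\ref{tn1}), one has
$$R_n(x,\te)=(a_1c_2-a_2c_1)^2-(a_1b_2-a_2b_1)(b_1c_2-b_2c_1).$$
Under the substitution $u=x^\te$ this becomes a polynomial in $u$ and $x^2$, with the first term equal to $[(n+2\la)(n+1)-n(n+2\la+1)u^2]^2$ and the product $(a_1b_2-a_2b_1)(b_1c_2-b_2c_1)$ carrying an explicit factor $4x^2$.

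First I would verify $R_n(0,\te)=(n+2\la)^2(n+1)^2>0$, fixing the sign at the left endpoint. At $x=1$ both quadratics share the root $\tau=1$ (since $y_n(1)=1$ forces $\Delta_n(1)=0$), so $R_n(1,\te)=0$. Using implicit differentiation of $\mathcal{T}_n(\tau)=0$ and $\mathcal{T}_{n+1}(\tau)=0$, I would then compute the tangent slopes $d\tau/dx$ of the two curves at $(1,1)$ and check that they coincide exactly when $(1+2\la)\te=2$. Hence for $\la>0$ and $\te=2/(1+2\la)$ the curves are tangent at $(1,1)$ and $R_n$ vanishes to order two there, while for $-1/2<\la<0$ and $\te=4/(2-\la)$ the vanishing is simple; the case $\la=0$ is the boundary limit in which both formulas give $\te=2$.

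The core step is to factor $R_n(x,\te)=(1-u)^k\,Q_n(x,\te)$, with $k=2$ in the tangent case and $k=1$ otherwise, and then prove $Q_n(x,\te)>0$ on $[0,1]$. Any interior zero of $R_n$ would correspond to an extraneous intersection of $\mathcal{T}_n$ and $\mathcal{T}_{n+1}$, and the precise calibration of $\te$ in the theorem's statement is designed to rule this out. I would express $Q_n$ as a polynomial in $u$ with $x^2$-dependent coefficients and, separately in each of the two $\la$-ranges, bound these coefficients using the specific form of $\te$ so as to obtain either a manifestly nonnegative sum-of-squares decomposition or a monotonicity statement in $u$ on $[0,1]$.

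The main obstacle is exactly this last step. The expressions for the coefficients of $Q_n$ are bulky, and the role of the specific value of $\te$ in producing positivity only becomes transparent after nontrivial cancellations. In the regime $\la\ge 0$ I expect a clean sum-of-squares style representation to be available, whereas in the regime $-1/2<\la<0$ a more delicate monotonicity argument, leveraging the standing hypothesis $\te<2$, will likely be required to close the argument.
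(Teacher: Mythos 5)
There is a genuine gap here, and also a step that fails as stated. The concrete error first: you propose to factor $R_n(x,\te)=(1-u)^k Q_n$ with $u=x^\te$, treating $R_n$ as a polynomial in $u$ with $x$-dependent coefficients. But $(1-u)$ is not a factor of $R_n$ in that sense. Setting $u=1$ while keeping $x$ free gives $A=-2\la$, $B=C=-\la$, so $R_n\big|_{u=1}=4\la^2-4x^2\la^2=4\la^2(1-x^2)$, which is not identically zero for $\la\neq 0$. The vanishing of $R_n$ at $x=1$ occurs only because $u$ and $x$ tend to $1$ simultaneously, so the proposed polynomial factorization, and with it the plan to exhibit a cofactor $Q_n$ as a sum of squares or a monotone function of $u$, does not get off the ground. (Your tangency computation at $(1,1)$, showing the slopes agree exactly when $(1+2\la)\te=2$, is correct, but it is a boundary observation and does not by itself control the sign of $R_n$ in the interior.)

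The larger problem is that the entire content of the lemma lives in the step you defer as "the main obstacle": nothing in the proposal actually establishes positivity on $(0,1)$, and the boundary evaluations $R_n(0,\te)>0$, $R_n(1,\te)=0$ do not exclude an interior sign change. For comparison, the paper's proof handles the two regimes by quite different devices. For $\la>0$ it first proves $R_0>0$ by a monotonicity argument, then shows that $D_n=\bigl(R_n-R_0\bigr)/\bigl(n(n+2\la+1)(1-x^\te)\bigr)$ --- note it is $R_n-R_0$, not $R_n$, that carries the factor $(1-x^\te)$ --- splits into a manifestly positive product (using $\te<2$, so $1-2x+x^\te>(1-x)^2\ge 0$) plus $4\la\,\eta(x)$, and proves $\eta>0$ by a two-region derivative argument. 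For $-1/2<\la<0$ it uses the inequality $x^2<x^\te$ to replace $A^2-4x^2BC$ by $A^2-4x^\te BC$, which factors completely in $z=1-x^\te$, and then crucially restricts attention to $x>x_0$ (the vertex of $\mathcal{T}_{n+1}$, below which both curves are non-real and no intersection can occur), verifying $x_0^\te>\frac{(n+1)(n+2\la)}{n(n+2\la+1)}$ by a monotonicity-in-$n$ argument; this is precisely where the specific value $\te=4/(2-\la)$ enters. Neither of these mechanisms --- the subtraction of $R_0$ in the first case, nor the $x^2\mapsto x^\te$ relaxation and the restriction to $x>x_0$ in the second --- appears in your proposal, and without some substitute for them the argument cannot be closed.
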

\begin{proof}
 The resultant of $\mathcal{T}_n(x)$ and $\mathcal{T}_{n+1}(x)$ in $\tau$ can be written as
$$ R_n(x, \te) =A^2-4 x^2 B C, $$
where
$$A=  n ( n+2\la+1) x^{2 \te}-(n+1) ( n+2\la), $$
$$B= n (n+\la+1) x^\te - (n+1) (n+\la),$$
$$C= ( n+\la) ( n+2\la+1) x^\te - ( n+\la+1) ( n+2\la).$$
We shall consider two cases.

\vspace{0.5cm}
\noindent{\it Case 1.} $\la > 0$.
First, we observe that
$ R_0(x, \te) > 0$ for $\te=\frac{2}{1+2\la}$ and $0 <x<1$.
Indeed,
$$\rho(x)=\frac{1}{4 \la^2} \, R_0(x, \te)=1 - 2(1+\la) x^2  +(1+2\la) x^{2 + \frac{2}{1+2\la}}  >0, $$
since
$$\rho'(x)=4(1+\la)(x^\te-1)x <0,\; \text{and} \;\; \rho(1)=0.$$

Hence, it will be enough to prove the following inequality,
$$
D_n(x, \te)=
\frac{R_n(x, \te)-R_0(x, \te)}{n (n+2\la+1) (1 - x^\theta)}>0,$$
where
$$D_n(x, \te) =
n ( n+2\la+1) (1 - x^\te) (1 - 2 x + x^\te) (1 + 2 x + x^\te) \, + 4 \la \eta(x),$$
and
$$\eta(x)= 1 - (3+\la) x^2  + (1+ x^2 + \la x^2)x^\te .$$
The first summand at the right hand side of $D_n(x, \te)$ is positive as $0<\te <2$ and $0 <x<1$. Let us show that $\eta(x)$ is also positive in that region.
We have
$$\eta(x) >1-(3+\la)x^2+(1+x^2+\la x^2)x^2 =(1-x^2)(1-x^2-\la x^2), $$
hence $\eta(x)>0$ for $x <\frac{1}{\sqrt{1+\la}}$. Assume now that $\frac{1}{\sqrt{1+\la}} \le x <1$, then
$$\frac{(1+2\la )x}{2}\, \eta'(x)=\big(1+2(1+\la)^2 x^2 \big)x^\theta -(2\la^2+7 \la+3)x^2 <$$
$$1+2(1+\la)^2 x^2-(2 \la^2+7\la+3)x^2 =1-(1+3\la)x^2 \le 1-\frac{1+3\la}{1+\la} <0, $$
hence $\eta(x) >\eta(1)=0$.
This completes the proof of Case 1.

\vspace{0.5cm}
\noindent
{\it Case 2.} $-1/2 < \la <0$.
To show that $R_n(x, \te)>0$, we may assume that $B C > 0$, otherwise there is nothing to prove.
By $\te <2$ we have
$$ R_n(x, \te) =A^2-4 x^2 B C \ge A^2-4 x^\te B C =$$
$$z ( n z+2\la) \left( (n+2\la+1) z-2 \la \right)\left( (n+2\la+1)n z-2 \la \right),\;\;\;z=1-x^\te . $$
 This implies that $R_n(x, \te) > 0$ for $\dfrac{(n+1)(n+2 \lambda)}{n(n+2\lambda+1)} < x <1$.

We have to show that $R_n(x,\te) \ge 0$ only for $x_0 < x< 1$, where $x_0$ is defined by (\ref{x0}). Hence, it is enough to check that for $ \te=4/(2-\la)$,
$$x_0^\te >\dfrac{(n+1)(n+2 \lambda)}{n(n+2\lambda+1)}  \, ,$$
or, equivalently,
$$g(n, \la)=2\ln \frac{(n+1)(n+2 \lambda+1)}{(n+\lambda+1)^2}-\lambda \ln \frac{n(n+2\lambda+1)}{(n+1)(n+2\lambda)} >0.$$
Since
$$\frac{\partial g(n, \la) }{\partial n} = - \frac{2 \lambda^2(3n+2 \lambda^2+3 \lambda+1)}{n(n+1)(n+\lambda+1)(n+2\lambda)(n+2 \lambda+1))} <0, $$
we have
$$g(n, \la) > \lim_{n \rightarrow \infty} g(n, \la) =0.$$
This completes the proof.
\end{proof}

\begin{lemma}
\label{taupm}
 Let  $0 <x<1$, $\lambda >-1/2$, then the curves $\mathcal{T}_{n}$ and $\mathcal{T}_{n+1}$ are nested, and $\mathcal{T}_{n+1}$ lies inside $\mathcal{T}_n$.
\end{lemma}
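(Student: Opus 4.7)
The nesting claim is equivalent to $\mathcal{T}_n(x, \tau_{n+1}^\pm(x)) < 0$ for every $x \in (x_0^{(n+1)}, 1)$, where $x_0^{(n+1)}$ denotes the vertex of $\mathcal{T}_{n+1}$: since both quadratics have positive leading coefficients in $\tau$, this says the two roots $\tau_{n+1}^\pm$ of $\mathcal{T}_{n+1}$ lie strictly between the two roots $\tau_n^\pm$ of $\mathcal{T}_n$. I will produce the two required inequalities simultaneously through a product identity.

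The plan is to derive and exploit
$$(n+2\lambda+1)^2 x^{2\theta}\, \mathcal{T}_n(x, \tau_{n+1}^+)\, \mathcal{T}_n(x, \tau_{n+1}^-) = R_n(x, \theta),$$
where $R_n$ is the resultant from the previous lemma. To obtain it, I would substitute $\tau = \tau_{n+1}^\pm$ into $\mathcal{T}_n$ and use $\mathcal{T}_{n+1}(x, \tau_{n+1}^\pm) = 0$ to eliminate the quadratic term, reducing each value to a linear expression of the form $A - 2xC\,\tau_{n+1}^\pm$ with $A$, $B$, $C$ the quantities from Lemma~2. Multiplying the two expressions and applying Vieta's formulas for $\tau_{n+1}^+ + \tau_{n+1}^-$ and $\tau_{n+1}^+ \tau_{n+1}^-$ then collapses the result to $A^2 - 4x^2 BC = R_n$, via the key auxiliary relation $(n+1)C - (n+\lambda+1)A = -(n+2\lambda+1) x^\theta B$.

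With this identity in hand, the previous lemma ($R_n > 0$ on $(0,1)$) forces both factors $\mathcal{T}_n(x, \tau_{n+1}^\pm(x))$ to share a common nonzero sign throughout $(x_0^{(n+1)}, 1)$. To pin down the sign, I would take the limit $x \to 1^-$: one of the two $\tau_{n+1}$-roots at $x=1$ equals $1$, at which $\mathcal{T}_n(1, 1) = 0$; the other equals $(n+1)/(n+2\lambda+1)$, at which a brief direct computation (using $a+c-2b = 2$ with $a = n+2\lambda+1$, $b = n+\lambda$, $c = n+1$) yields $\mathcal{T}_n(1, (n+1)/(n+2\lambda+1)) = -4\lambda^2/(n+2\lambda+1)^2 < 0$ for $\lambda \neq 0$. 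By continuity along each curve, the common sign is negative on the whole interval $(x_0^{(n+1)}, 1)$, which yields the strict nesting.

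The main obstacle is the algebraic product identity in the first step: the cancellation that brings the product of linear expressions back to $R_n$ is not visibly obvious and rests on the precise relation among $A$, $B$, $C$. The excluded boundary value $\lambda = 0$ (which corresponds to $\theta = 2$, ruled out by the initial WLOG assumption $\theta < 2$) would be handled separately through the Chebyshev identity.
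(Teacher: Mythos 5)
Your proof is correct and follows essentially the same route as the paper: positivity of the resultant $R_n$ from the preceding lemma rules out intersection, and an evaluation at $x=1$ anchors which curve lies inside which. The paper merely compares the root intervals at $x=1$ (namely $\left[\frac{n+1}{n+2\lambda+1},1\right]\subset\left[\frac{n}{n+2\lambda},1\right]$) and invokes non-intersection, whereas your product identity $(n+2\lambda+1)^2x^{2\theta}\,\mathcal{T}_n(x,\tau_{n+1}^{(+)})\,\mathcal{T}_n(x,\tau_{n+1}^{(-)})=R_n(x,\theta)$ (a standard resultant fact, and your auxiliary relation $(n+1)C-(n+\lambda+1)A=-(n+2\lambda+1)x^{\theta}B$ does verify) together with the endpoint value $\mathcal{T}_n\bigl(1,\frac{n+1}{n+2\lambda+1}\bigr)=-4\lambda^2/(n+2\lambda+1)^2$ makes the propagation of nesting fully explicit; this is a more rigorous rendering of the same argument, not a different one.
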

\begin{proof}
For $\lambda >0$, using an obvious notation for the branches of the roots in (\ref{tn}) and (\ref{tn1}), we obtain at $x=1$
$$\left(\tau_n^{(+)},\tau_n^{(-)}\right) =\left( 1, \frac{n}{n+2\la}\, \right), \;\;\;
\left( \tau_{n+1}^{(+)},\tau_{n+1}^{(-)} \right) =\left(1, \frac{n+1}{n+2\la+1}\,  \right).$$
Similarly, for $\la <0$, we obtain
$$\left(\tau_n^{(+)},\tau_n^{(-)}\right) =\left( \frac{n}{n+2\la}\,,1 \right), \;\;\;
\left( \tau_{n+1}^{(+)},\tau_{n+1}^{(-)} \right) =\left( \frac{n+1}{n+2\la+1}\,, 1  \right).$$
Since the curves do not intersect for $x<1$, it follows that $\mathcal{T}_{n+1}$ is enclosed inside $\mathcal{T}_{n}$.
This completes the proof of inequality (\ref{tumain}).

\end{proof}

It is left to show that for $\lambda \ge 0$ the value $\theta = 2/(2 \la+1) $ is sharp for all $n \ge 1$. The following lemma completes the proof of Theorem \ref{thmain}.

\begin{lemma}
Let $\la \ge 0$, then for any $n \ge 1$ inequality (\ref{tumain}) does not hold with $\te > \frac{2}{1+2\la}$ for $x$ less than and sufficiently close to one.
\end{lemma}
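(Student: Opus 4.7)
The plan is to generalise to arbitrary $n\ge 1$ the Taylor expansion argument already used in the excerpt for $n=1$. Since $y_k(1)=1$ for every $k$, we have $\Delta_n(1)=0$, so the sign of $\Delta_n(x)$ for $x$ just below $1$ is controlled by $\Delta_n'(1)$. The key point will be to show that
$$\Delta_n'(1) \;=\; \theta-\dfrac{2}{1+2\la},$$
which is strictly positive whenever $\theta>\frac{2}{1+2\la}$; combined with $\Delta_n(1)=0$ and continuity, this forces $\Delta_n(x)<0$ on some left neighbourhood of $1$.

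To carry this out, I would set $\alpha_k:=y_k'(1)$, so that differentiating $\Delta_n(x)=x^\theta y_n^2(x)-y_{n-1}(x)y_{n+1}(x)$ at $x=1$ immediately yields
$$\Delta_n'(1) \;=\; \theta+2\alpha_n-\alpha_{n-1}-\alpha_{n+1}.$$
Next I would obtain a closed form for $\alpha_k$, using either the classical identity $(C_k^{(\la)})'(x)=2\la\, C_{k-1}^{(\la+1)}(x)$ together with $C_k^{(\la)}(1)=(2\la)_k/k!$, or by differentiating the three term recurrence (\ref{3termr}) at $x=1$ and solving the resulting two term recurrence with base values $\alpha_0=0$, $\alpha_1=1$. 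Either route gives
$$\alpha_k \;=\; \dfrac{k(k+2\la)}{1+2\la}.$$

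Since $\alpha_k$ is a quadratic polynomial in $k$ with leading coefficient $1/(1+2\la)$, its second difference is constant:
$$\alpha_{n-1}+\alpha_{n+1}-2\alpha_n \;=\; \dfrac{2}{1+2\la},$$
independently of $n$. Substituting this into the expression for $\Delta_n'(1)$ produces the claimed formula $\Delta_n'(1)=\theta-\frac{2}{1+2\la}$, and the conclusion follows. The argument presents no genuine obstacle; the only point worth care is the derivation of the closed form for $\alpha_k$, and the small miracle that makes the critical threshold $\frac{2}{1+2\la}$ the \emph{same} for every $n$ is precisely the fact that the second difference of $k(k+2\la)$ does not depend on $n$.
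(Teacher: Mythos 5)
Your proof is correct and follows essentially the same route as the paper: both arguments rest on the first-order Taylor expansion of $\Delta_n$ at $x=1$, whose linear coefficient $\frac{2}{1+2\la}-\te$ becomes negative exactly when $\te>\frac{2}{1+2\la}$. The only difference is that you derive this coefficient by hand, via $y_k'(1)=\frac{k(k+2\la)}{1+2\la}$ and the constant second difference of $k(k+2\la)$, whereas the paper obtains the expansion with Mathematica; your computation checks out (it agrees with the recurrence $(n+2\la)\alpha_{n+1}=2(n+\la)(\alpha_n+1)-n\alpha_{n-1}$) and makes the step self-contained.
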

\begin{proof}
Expanding $\Delta_n$ into Taylor series around $x=1$ (we used Mathematica), one gets
$${n+2 \la-1 \choose n }^2  \Delta_n=\frac{(2-\te-2\la \te)\Gamma^2(n+2\la)}{(1+2\la) \Gamma^2(2\la) \, n!^2 } \, (1-x) +O((1-x)^2).$$
Hence, $\Delta_n < 0$ in a vicinity of $x=1$ if $\te > \frac{2}{1+2 \la}\, .$
Note that the main term of the expansion for $\te=\frac{2}{1+2 \la}$ is
$$\frac{16 \la^2 (3n^2+6 \la n+2 \la^2-\la) \Gamma^2(n+2\la)}{(3+2\la) \Gamma^2(2 \la+2) n!^2}\, (1-x)^2, $$
which is still positive.
\end{proof}
Since the vertex of $\mathcal{T}_n$ is situated at
$$\tilde{x}=\left( \frac{n(n+2\la)}{(n+\la)^2}\right)^{1/(2-\te)}, \;\;\; \text{where} \;\;\; \frac{1}{2-\te} =\left\{
\begin{array}{cc}
\frac{1}{2}-\frac{1}{\la}\, , & -\frac{1}{2} <\la <0,\\
&\\
\frac{1}{2}+\frac{1}{4\la}\, , & \la \ge 0,
\end{array}
\right.$$
the inequalities of  Theorem \ref{thmain} are non-trivial only in the intervals $(\tilde{x},1)$ and $(-1, -\tilde{x})$. Outside these regions $\Delta_n>0$ just as a positive definite quadratic in $t_n$. In particular, for a fixed $\la$ the length of these intervals shrinks to $O(n^{-2})$.

Using that $\Delta_n$ must be positive definite in the oscillatory region, where $t_n$ runs through all real values, we can provide
more information about the location of vertex $\tilde{x}$ of $\mathcal{T}_n$ with respect to the zeros of $C_{n+1}^{(\la )} (x)$.
\begin{claim}
\label{tilde}
Let $x_1 >x_2 > \ldots > x_{n+1}$,  be the zeros of $C_{n+1}^{(\la )} (x)$. Then
$$
\tilde{x} > \left\{
\begin{array}{cc}
x_2, & -1/2 <\la<0,\\
&\\
x_1, & \la \ge 0.
\end{array}
\right.
$$
\end{claim}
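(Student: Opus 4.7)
The plan is to argue by contradiction using the geometric picture from the proof of Theorem~\ref{thmain}: since $\Delta_n\ge 0$ on $[-1,1]$, the curve $(x,t_n(x))$ with $t_n(x)=G_{n+1}^{(\la)}(x)/G_n^{(\la)}(x)$ must avoid the open ``egg'' $\{(x,\tau):\tau_n^{(-)}(x)<\tau<\tau_n^{(+)}(x)\}$ wherever the two branches of $\mathcal{T}_n$ are real, i.e.\ for $|x|\ge\tilde{x}$. The goal is then to exhibit, under the assumption that $\tilde{x}$ is too small, a subinterval on which the curve is forced to enter the egg.

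Two standard inputs are used. First, by the classical interlacing of zeros of orthogonal polynomials, the zeros $x_1>\cdots>x_{n+1}$ of $G_{n+1}^{(\la)}$ and $\xi_1>\cdots>\xi_n$ of $G_n^{(\la)}$ satisfy $x_1>\xi_1>x_2>\xi_2>\cdots$; second, $t_n$ is strictly increasing on each interval between consecutive zeros of $G_n^{(\la)}$ (Wronskian argument). Combined with $G_n^{(\la)}(1)=G_{n+1}^{(\la)}(1)=1$ and a sign check from interlacing, these give $t_n(x_k)=0$ at each zero $x_k$, $t_n(1)=1$, and $t_n(x)\to+\infty$ as $x\to\xi_1^-$; by Vieta the roots $\tau_n^{(\pm)}(x)$ are strictly positive whenever real.

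For $\la>0$ I would assume, for contradiction, $\tilde{x}\le x_1$, and work on $[x_1,1]$ (no pole of $t_n$ inside, since $\xi_1<x_1$). Throughout this interval the branches $\tau_n^{(\pm)}$ are real and positive, and at $x=1$ a direct substitution gives $\tau_n^{(-)}(1)=n/(n+2\la)<1=\tau_n^{(+)}(1)=t_n(1)$. Combining $\Delta_n\ge 0$ with the strict gap $t_n(1)-\tau_n^{(-)}(1)=2\la/(n+2\la)>0$ forces $t_n\ge\tau_n^{(+)}$ in a left neighbourhood of $1$. Hence the continuous function $t_n-\tau_n^{(-)}$ is $<0$ at $x_1$ and $>0$ near $1$; setting $c:=\sup\{x\in[x_1,1]:t_n(x)\le\tau_n^{(-)}(x)\}\in(x_1,1)$, we have $t_n(c)=\tau_n^{(-)}(c)<\tau_n^{(+)}(c)$ (branches strictly separated since $c>\tilde{x}$) and $t_n>\tau_n^{(-)}$ for $x$ just past $c$; by continuity $t_n<\tau_n^{(+)}$ there as well, so $t_n\in(\tau_n^{(-)},\tau_n^{(+)})$, contradicting $\Delta_n\ge 0$. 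The boundary case $\la=0$ ($\tilde{x}=1$) is immediate. For $-1/2<\la<0$ the same argument runs on $I=(x_2,\xi_1)$: assuming $\tilde{x}\le x_2$ makes $\tau_n^{(\pm)}$ real and positive on $I$, while $t_n$ strictly increases from $0$ at $x_2$ to $+\infty$ as $x\to\xi_1^-$, so $t_n>\tau_n^{(+)}$ near $\xi_1$; the sign change of $t_n-\tau_n^{(-)}$ on $I$ produces the same forced entry into the egg.

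The main obstacle is the $\la>0$ case, because the tangency $t_n(1)=\tau_n^{(+)}(1)$ noted in the preceding lemma prevents any direct ``$t_n>\tau_n^{(+)}$ at $x=1$'' claim; the correct local side of the egg must be extracted from the already-proved $\Delta_n\ge 0$ together with the strict separation $\tau_n^{(-)}(1)<1$. The sign bookkeeping around $\xi_1$ in the $\la<0$ case is routine once the interlacing order is in hand.
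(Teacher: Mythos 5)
Your proposal is correct and follows essentially the same route as the paper's own proof: a contradiction via the intermediate value theorem, using the interlacing of zeros, the monotonicity of $t_n$ from the Christoffel--Darboux kernel, the positivity of the branches $\tau_n^{(\pm)}$, and the key evaluation $\tau_n^{(-)}(1)=n/(n+2\la)<1=t_n(1)$ for $\la>0$, applied on $(x_2,\xi_1)$ and $(x_1,1)$ respectively. Your treatment is somewhat more careful than the paper's (which simply asserts that $\Delta_n>0$ forbids any intersection of $t_n$ with $\mathcal{T}_n$), since you show the trajectory actually enters the open region between the branches, but the underlying argument is the same.
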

\begin{proof} First notice that $|x_i| <1$.
If $\la=0$ the situation is trivial as $\tilde{x}=1$.
Assuming that $\la \neq 0$, we observe that $\Delta_n >0$ means that $t_n$ does not intersect $\mathcal{T}_n$. Notice also that $t_i$ is an increasing function of $x$ for any $i$, as
  $$y_n^2(x) \frac{d t_i}{d x}=y'_{n+1}(x)y_n(x)-y'_n(x)y_{n+1}(x) >0 $$
  is just the Christoffel-Darboux kernel. It is also easy to check that $\mathcal{T}_n^{(-)}(x)$, the lower branch of $\mathcal{T}_n$,  is positive and decreasing in $x$.
  The situation when $x_2  \ge \tilde{x}$ is impossible, as then $t_n$ running through the whole interval $[0,\infty)$ intersects $\mathcal{T}_n^{(-)}(x)$ at some point between $x_2$ and $x_1$. The same is true regarding the interval $[x_1,1]$, provided the intersection occurs before $x=1$. For $x \in (x_1,1]$, $t_n(x)$ increases from zero to one. However $\tau_n^{(-)}(1)=\frac{n+\la-|\la|}{n+2 \la} <1$ for $\la >0$, where  $\tau_n^{(\pm)}$ are defined by (\ref{tn}). Therefore the case $x_1 \ge \tilde{x}$ is impossible for $\la >0$, as otherwise $t$ and $\mathcal{T}_n^{(-)}(x)$ would intersect inside the interval $(x_1,1)$.
\end{proof}
The different situations appearing in the proof of Lemma \ref{tilde} are illustrated in Fig.1 for $n=4$ and three  values of $\la$. Here $x_2 <\tilde{x}< x_1$ for $\la=- \frac{1}{3}$, and $x_1 < \tilde{x}$ for $\la=-\frac{1}{4}$ and $\la=1$. In general, the difference between cases $\la <0$ and $\la >0$ in Lemma \ref{tilde} occurs since $t$ passes below $\mathcal{T}_n$ for $\la <0$ and above it for $\la >0$. For $\la <0$ we have $\tau_n^{(-)}(1)=1$, and both situations, $x_2 <\tilde{x} < x_1$ and $x_1 < \tilde{x} \le 1$, may occur depending on the values of $\la$ and $n$.

\begin{remark}
It seems difficult to find the exact value of $\te$ for $ -\frac{1}{2} < \la <0$. Let us note that for $\te >\frac{8}{4-\la}$ the resultant changes the sign for sufficiently large $n$. Thus the curves intersect and our arguments fail. Indeed, for $\te >\frac{8}{4-\la}$, in the notation of Lemma \ref{taupm},  the Taylor series expansion around $ n =\infty$ at $\hat{x}=\frac{3x_0+1}{4}$ yields,
$$\tau_n^{(+)}(\hat{x})-\tau_{n+1}^{(+)}(\hat{x})=\frac{3\lambda \left((4-\lambda)\te-8 \right))}{4(2-\te)n^2} +O(n^{-3}) <0,$$
$$\tau_{n+1}^{(-)}(\hat{x})-\tau_{n}^{(-)}(\hat{x})=\frac{\lambda \left((4+3 \lambda)\te-8 \right)}{4(2-\te)n^2}+O(n^{-3}) >0.$$
For $\te =\frac{8}{4-\la}$ the curves probably do not intersect as
$$R_n(\hat{x} ,\,\frac{8}{4-\la})=\frac{9}{2} \,(8+\la^2) \la^4 n^{-4} +O(n^{-5})>0.$$
\end{remark}

\section{Proof of Theorem \ref{genleg}}

\begin{proof}
It is enough to consider the case $0<x<1$, as in Theorem \ref{thmain}. We may assume that coefficients $a_n$ are strictly decreasing treating the general situation as the limiting case.
By the assumption $\te <2$, we get
$$\Delta_1=x^{2\te}-\frac{x^2-a_1}{1-a_1} >\frac{1-x^2}{1-a_1} (2a_1-1) >0.$$

 Note also that if the \tu inequality of the theorem holds for some $\te_0$, it also holds for any $\te <\te_0$.

 The proof is by the induction on $n$ and is similar to that of Theorem \ref{thmain}. To show that $\Delta_{n+1} \ge 0$ we consider the following two expressions,
 $$\dfrac{a_n \Delta_n}{p_n^2(x)}\;\;\;\mbox{and}\;\;\; \dfrac{(1-a_{n+1}) \Delta_{n+1}}{p_n^2(x)}\,,$$ set $t=p_{n+1}(x)/p_n(x) ,$ and define the corresponding curves
$$\mathcal{T}_n (x)=(1-a_n)\tau^2 - x \tau  + a_n  \tau^\te =0, $$
$$\mathcal{T}_{n+1} (x) =(1-a_{n+1)}x^\te \tau^2-x \tau+a_{n+1} =0,$$
with the vertices
$$\left(4a_n-4a_n^2 \right)^{1/(2-\te)} \; \text{and} \; \left(4a_{n+1}-4a_{n+1}^2 \right)^{1/(2-\te)},$$
respectively.
The both curves have a single component and are simultaneously real for
$$x \ge x_0=\left(4a_{n+1}-4 a_{n+1}^2 \right)^{1/(2-\te)} .$$

First, we have to show that $\mathcal{T}_n$ and $\mathcal{T}_{n+1}$ do not intersect on $(0,1)$.
The resultant of $\mathcal{T}_n$ and $\mathcal{T}_{n+1}$ in $\tau$ is given by
\be
\label{res}
R_n(x,\te)=A^2-x^2 B C,
\ee
where
$$A=a_n (1-a_{n+1})t^{2 \te}-(1-a_{n})a_{n+1} , $$
$$B=a_{n+1}-a_n x^\te , $$
$$C=1-a_{n}-(1-a_{n+1})x^\te. $$
If $BC \le 0$ there is nothing to prove. Otherwise, using $x^2<x^\te$, we get
$$R_n(x,\te) > A^2-x^\te BC =$$
$$
(1-x^\te)\left(1-a_n -a_n x^\te \right)\left( a_{n+1}-(1-a_{n+1})x^\te \right)\left(a_{n+1} (1-a_{n})-a_n (1-a_{n+1})x^\te \right).$$
Thus, $R_n(x,\te) >0$ for
$$\frac{a_{n+1}(1-a_n)}{a_n(1-a_{n+1})}< x^\te <1. $$
Hence, the curves do not intersect on $(x_0,1)$ if
$$x_0^\te > \frac{a_{n+1}(1-a_n)}{a_n(1-a_{n+1})} \, .  $$
This yields inequality (\ref{gen te}).

It is left to check that $\mathcal{T}_{n+1} (x)$ lies inside $\mathcal{T}_{n} (x)$.
 For $x=1$, one finds
 $$\mathcal{T}^{(+)}_n(1)=\frac{a_n}{1-a_n} > \mathcal{T}_{n+1}^{(+)}(1) = \frac{a_{n+1}}{1-a_{n+1}} \, , \;\;\; \mathcal{T}_n^{(-)}(1) = \mathcal{T}_{n+1}^{(-)}(1) =1.$$
Hence, the curves are nested with $\mathcal{T}_{n+1}$ lying inside $\mathcal{T}_{n}$.
\end{proof}

\begin{remark}
It is easy to show that for ultraspherical polynomials and $\la <0$, Theorem \ref{genleg} leads to the same result as Theorem \ref{thmain}.
Indeed, for the ultraspherical case, $a_n=\frac{n}{2(n+\la)} $,
and
$$F(n)=\dfrac{2 \log \dfrac{(1-a_n)\,a_{n+1}}{(1-a_{n+1})\,a_n}}{\log \dfrac{4(1-a_n)\, a_{n+1}^2}{a_n}}=
\dfrac{2\log \dfrac{(n+1)(n+2\la)}{n(n+2\la+1)}}{\log \dfrac{(n+1)^2(n+2\la)}{n(n+\la+1)^2}} \, .$$
One finds $\frac{d F(n)}{d n} <0$, hence,
$$ \inf F(n)=\lim_{n \rightarrow \infty}F(n) =\frac{4}{2-\la} \, .$$
On the other hand, an attempt to generalise the case $\la >0$, would require to work with recurrence (\ref{gen3term}) for a growing sequence $a_n$, where $a_n <1/2$. It is unclear how to extract $\te$ from the inequality $R_n(x,\te) >0$ in that case.
\end{remark}

\section{Proof of Theorem \ref{thherm}}

\begin{proof}
The proof is by the induction on $n$.
 Without loss of generality, we assume that the sequence $a_i$ is strictly increasing and that $x >0$.
It will be convenient to set $a_0=0$, then $\Delta_1(x)=a_1^2/(x^2+a_1)>0$.
Rewriting $\Delta_n$ and $\Delta_{n+1}$ in terms of $t=\frac{p_{n+1}(x)}{p_n(x)}$, and setting $ d_i=a_i-a_{i-1}>0$, we
consider $a_n(x^2+d_n)\dfrac{\Delta_n(x)}{p_n^2(x)}$ and $(x^2+d_{n+1})\dfrac{\Delta_{n+1}(x)}{p_n^2(x)}$.
Then the corresponding curves are
$$\mathcal{T}_n(x)=(x^2+d_n)\tau^2-(x^2+d_n)x \tau +a_n x^2 =0,$$
$$\mathcal{T}_{n+1}(x)=x^2 \tau^2-(x^2+d_{n+1})x \tau+a_{n+1}(x^2+d_{n+1})=0.$$
To show that $\mathcal{T}_n$ and $\mathcal{T}_{n+1}$ do not intersect, we
calculate the resultant in $\tau$. This yields a rather long expression of the form
$$ \sum_{i=0}^3 b_{i}\, x^{2i},$$
where all $b_{i}$ are positive as they are composed of positive monomials. Therefore the resultant does not vanish. We omit the details.

It is left to verify that the curves are nested and $\tau_{n+1}$ lies inside $\tau_n$.
For, it is enough to check that the value of $\mathcal{T}_n$ at
the vertex of $\mathcal{T}_{n+1}$ with coordinates $$\left( \sqrt{3 a_{n+1}+a_n}\, , \dfrac{2a_{n+1}}{\sqrt{3 a_{n+1}+a_n}} \right)$$ is negative.
Indeed, one calculates that it is
$$- \frac{6d_{n+1}^3+(17a_n+2d_n)d_{n+1}^2+6 a_n (2 a_n + d_n) d_{n+1}+4 a_n^2 d_n}{3a_{n+1}+a_n} <0. $$
Hence, $\Delta_{n+1}(x)>0$, provided $\Delta_n(x) >0$. This completes the proof of (\ref{genhermite}).

Finally, let $H_n(x)$ and $\mathcal{H}_n(x)$ be the Hermite polynomials in the standard and monic normalisation, respectively, $\mathcal{H}_n(x)=2^{-n} H_n(x).$
Now (\ref{hermite}) follows by $a_n=n/2$ in the monic case.

\end{proof}


\begin{thebibliography}{8}
\bibitem{askey} R. Askey, \emph{Linearization of the Product of Orthogonal Polynomials}, in  \emph{Problems in Analysis: A Symposium in Honor of Salomon Bochner (PMS-31), ed. Robert C. Gunning,}  Princeton University Press, (1970) 131--138.

\bibitem{GK} Stefan Gerhold and Manuel Kauers, \emph{ A computer proof of Tur\'{a}n's inequality}, J. Ineq. Pure Appl. Math. Vol. 7(2), \#42, (2006).

\bibitem{GP} G. Nikolov, V. Pillwein, \emph{An extension of Tur\'{a}n’s inequality for ultraspherical polynomials}, Math. Inequal. Appl. 18(1) (2015), 321--335.

\bibitem{sz1} O. Sz\'asz. \emph{Inequalities concerning ultraspherical polynomials and Bessel functions}. Proc. Amer. Math. Soc. 1 (1950), 256--267.

\bibitem{szego} G. Szeg\"{o}, {\em Orthogonal Polynomials}, Amer. Math. Soc.
Colloq. Publ., v.23, Providence, RI, 1975.

\end{thebibliography}
\end{document}